\newtheorem{theorem}{Theorem}
\newtheorem{lemma}{Lemma}
\newtheorem{definition}{Definition}
\newtheorem{corollary}{Corollary}
\title{On properties of the Taylor series coefficients of the Riemann xi function at $s=\frac{1}{2}$}
\author{Mario DeFranco}
\begin{document}
\maketitle{}
\begin{abstract}
We prove some properties about the non-zero Taylor series coefficients $a_k$ of the Riemann xi function $\xi(s)$ at $s=\frac{1}{2}$. In particular, we present integral formulas that evaluate $a_k$ whose integrands involve a Gaussian function and a function we call $L(x;k)$. We use these formulas to show that $a_k$ is positive. We also define a sequence of polynomials $p(x;n)$ which arise naturally from the integral formulas and use them to prove that the coefficients $a_k$ are decreasing. 
\end{abstract}

\section{Introduction}
The Riemann xi function $\xi(s)$ is an entire function that has a significant role in number theory: it is the ``completed" form of the Riemann zeta function $\zeta(s)$
\[
\xi(s) = -s(1-s)\pi^{-\frac{s}{2}}\Gamma(\frac{s}{2})\zeta(s)
\] 
where 
\[
\zeta(s) = \sum_{n=1}^\infty \frac{1}{n^s}
\]
and $\Gamma(s)$ is the Gamma function
\[
\Gamma(s) = \int_0^\infty e^{-x}x^{s-1}\, dx.
\]
This completion has many desirable properties which $\zeta(s)$ by itself does not posses; there is the functional equation 
\[
\xi(s) = \xi(1-s)
\]
and the fact that, for real $t$, $\xi(\frac{1}{2}+it)$ is real and even. These properties are manifest in the defining integral formula \eqref{xi integral} for $\xi(s)$. Furthermore, it was conjectured by B. Riemann \cite{Riemann} that all the zeros of $\xi(s)$ lie on the line $s=\frac{1}{2}+i t$. Thus it is natural to investigate the Taylor series coefficients of $\xi(s)$ at $s=\frac{1}{2}$. 

We prove some properties about the non-zero Taylor series coefficients $a_k$ (see the definition of the $a_k$ below). In particular, we present an integral formula that evaluates $a_k$ whose integrand is a Gaussian function over the square root of $x$
\[
\frac{e^{-\pi x^2}}{\sqrt{x}}
\]
times a certain function $L(x;k)$ defined in Section \ref{L}. We use this integral formula to show that each $a_k \geq0$. We also define a sequence of polynomials $p(x;n)$ which arise naturally from the integral formula. Using a positivity property of one such polynomial $p(x;2)$ we also prove that 
\[
a_k \geq a_{k+1}\geq0.
\]

The numbers $a_k$ are elementary-symmetric functions evaluated at the zeroes of $\xi(\frac{1}{2}+i\sqrt{t})$. Therefore information about the positivity of the $a_k$ could be related to information about the zeros of $\xi(s)$. Also, the generalized Tur\'an inequalities are positivity conditions involving the Taylor series coefficients of a function. The integral formulas we present could be applied to proving the generalized Tur\'an inequalities for $\xi(s)$. That the growth of $L(x;k)$ is slow compared to the decay of Gaussian could be useful in estimating these quantities.

We now review the defining integral formula for $\xi(s)$ and the definition of $a_k$. We have for $s \in \mathbb{C}$
\[
\pi^{-\frac{s}{2}}\Gamma(\frac{s}{2})\zeta(s) = \int_{1}^\infty (\sum_{n=1}^\infty e^{-\pi n^2 x }) (u^{\frac{1-s}{2} -1}+u^{\frac{s}{2} -1})\, dx  - (\frac{1}{1-s}+\frac{1}{s}).
\]
See \cite{Stein}, chapter 6 for the derivation of this formula. Therefore we let 
\begin{equation} \label{xi integral}
\xi(s)=-s(1-s)\pi^{-\frac{s}{2}}\Gamma(\frac{s}{2})\zeta(s) = 1- s(1-s)\int_{1}^\infty (\sum_{n=1}^\infty e^{-\pi n^2 x }) (u^{\frac{1-s}{2} -1}+u^{\frac{s}{2} -1})\, dx
\end{equation}
and let $s = \frac{1}{2}+it$ to obtain
\[
\xi(\frac{1}{2}+ i t) = 1-(\frac{1}{4}+t^2)\int_{1}^\infty (\sum_{n=1}^\infty e^{-\pi n^2 x }) x^{-\frac{3}{4}} (x^{- \frac{i t}{2}}+x^{\frac{i t}{2}})\, dx.
\]
Define the coefficients $a_k$ by
\[
\xi(\frac{1}{2}+ i t)  = \sum_{k=0}^\infty (-1)^k a_kt^{2k},
\]
\[
(-1)^k \frac{(\frac{d}{dt})^{2k}}{(2k)!}\xi(\frac{1}{2}+ i t)|_{t=0}=a_k.
\]

\section{Integral formula using $L(x;k)$} \label{L}

We show how the function $L(x;k)$ arises from the integral definition of $\xi(s)$.

\begin{lemma}\label{a integral}

We have the integral formulas 
\[
a_{k+1} =4 \sum_{M=1}^\infty  \int_M^{M+1} (\sum_{n=1}^M \frac{e^{-\pi x^2}}{\sqrt{x}} (\frac{\log(\frac{n}{x})^{2k})}{(2k)!\sqrt{n}}- \frac{\log(\frac{n}{x})^{2k+2})}{(2k+2)!4\sqrt{n}})\, dx
\]
and 
\[
a_0 = \int_{-1}^1 e^{-\pi x^2}\, dx+ \sum_{M=1}^\infty\int_M^{M+1} \frac{e^{-\pi x^2 }}{\sqrt{x}} (2\sqrt{x}-\sum_{n=1}^M \frac{1}{\sqrt{n}} )\, dx.
\]
\end{lemma}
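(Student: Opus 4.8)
The plan is to start from the cosine form of the defining integral, expand in powers of $t$ to read off the $a_k$, and then perform two successive changes of variables that turn the theta-like sum into the stated Gaussian integrals. First I would use $x^{-it/2}+x^{it/2}=2\cos(\tfrac{t}{2}\log x)$ to write
\[
\xi(\tfrac12+it)=1-(\tfrac14+t^2)\,I(t),\qquad I(t)=2\int_1^\infty \psi(x)\,x^{-3/4}\cos\!\big(\tfrac{t}{2}\log x\big)\,dx,
\]
where $\psi(x)=\sum_{n\ge1}e^{-\pi n^2 x}$. Expanding the cosine as a power series and integrating term by term (justified below), I set $b_j=2\int_1^\infty \psi(x)\,x^{-3/4}(\log x)^{2j}\,dx$, so that $I(t)=\sum_{j\ge0}\frac{(-1)^j}{(2j)!\,4^j}b_j\,t^{2j}$. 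Matching the coefficient of $t^{2k}$ in $\xi=1-(\tfrac14+t^2)I(t)$ then gives $a_0=1-\tfrac14 b_0$ and, for $k\ge0$,
\[
a_{k+1}=\frac{b_k}{(2k)!\,4^k}-\frac{b_{k+1}}{(2k+2)!\,4^{k+2}}.
\]

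Next I would reduce the $b_j$ to Gaussians. The substitution $x=y^2$ turns $\psi(x)x^{-3/4}$ into $\sum_n e^{-\pi n^2 y^2}y^{-1/2}$ and absorbs the powers of $4$, yielding $\frac{b_k}{(2k)!\,4^k}=\frac{4}{(2k)!}\int_1^\infty \sum_n e^{-\pi n^2 y^2}y^{-1/2}(\log y)^{2k}\,dy$ and the same expression for the $b_{k+1}$ term with the constant $4$ replaced by $1$. Then, for each fixed $n$, the substitution $x=ny$ sends $e^{-\pi n^2 y^2}$ to $e^{-\pi x^2}$, converts $y^{-1/2}\,dy$ into $n^{-1/2}x^{-1/2}\,dx$, and converts $(\log y)^{2j}$ into $(\log(x/n))^{2j}=(\log(n/x))^{2j}$, with the lower limit becoming $x=n$. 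Summing over $n$ and swapping sum and integral, for fixed $x$ only the terms $n\le\lfloor x\rfloor$ survive; partitioning $[1,\infty)$ into the intervals $[M,M+1]$ (on which $\lfloor x\rfloor=M$) produces exactly the inner sum $\sum_{n=1}^M \frac{(\log(n/x))^{2j}}{\sqrt n}$ weighted by $\frac{e^{-\pi x^2}}{\sqrt x}$. Substituting this into the coefficient formula and factoring out $4$ yields the stated formula for $a_{k+1}$.

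For $a_0$ I would apply the same two substitutions with $j=0$ to get $\tfrac14 b_0=\sum_{M\ge1}\int_M^{M+1}\frac{e^{-\pi x^2}}{\sqrt x}\sum_{n=1}^M \frac{1}{\sqrt n}\,dx$, and then account for the constant $1$ via the Gaussian value $\int_0^\infty e^{-\pi x^2}\,dx=\tfrac12$. Writing $1=2\int_0^\infty e^{-\pi x^2}\,dx=\int_{-1}^1 e^{-\pi x^2}\,dx+2\int_1^\infty e^{-\pi x^2}\,dx$ and observing that $2e^{-\pi x^2}=\frac{e^{-\pi x^2}}{\sqrt x}\cdot 2\sqrt x$, I can fold $2\int_1^\infty e^{-\pi x^2}\,dx$ into the sum over $M$, which produces the $2\sqrt x$ term and completes the stated formula for $a_0$.

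The main obstacle is purely analytic: justifying the interchanges of summation and integration. Term-by-term integration of the cosine series is controlled by dominating $\cos(\tfrac{t}{2}\log x)$ with $\cosh(\tfrac{|t|}{2}\log x)=\tfrac12(x^{|t|/2}+x^{-|t|/2})$ and using that $\psi(x)$ decays like $e^{-\pi x}$, so $\int_1^\infty \psi(x)x^{-3/4}\cosh(\tfrac{|t|}{2}\log x)\,dx$ converges for every $t$, legitimizing the definition of the $b_j$ and the coefficient extraction. The same exponential decay, now against $e^{-\pi x^2}$ after $x=y^2$, makes $\sum_n \int_n^\infty \frac{e^{-\pi x^2}}{\sqrt x}\,|\log(n/x)|^{2j}\,n^{-1/2}\,dx$ absolutely convergent, so Tonelli's theorem permits swapping the $n$-sum with the $x$-integral and regrouping into the intervals $[M,M+1]$. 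Once these convergence points are established, every remaining step is an elementary change of variables.
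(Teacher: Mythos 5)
Your proposal is correct and follows essentially the same route as the paper: extracting the Taylor coefficients from the defining integral (you via the cosine series, the paper via differentiation under the integral sign, which is equivalent), then the change of variables to Gaussian form (your two substitutions $x=y^2$ and $x=ny$ compose to the paper's single $x\mapsto x^2/n^2$), the regrouping over the intervals $[M,M+1]$, and the same use of $1=\int_{-\infty}^{\infty}e^{-\pi x^2}\,dx$ for $a_0$. Your explicit justification of the interchanges of sum and integral is a welcome addition that the paper leaves implicit.
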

\begin{proof}
 We differentiate \eqref{xi integral} under the integral sign and use
\[
 (\frac{d}{dt})^{2k}  (x^{- \frac{i t}{2}}+x^{\frac{i t}{2}})|_{t=0} = (-1)^k2(\frac{\log(x)}{2})^{2k} 
\]
to get for integer $k \geq 1$
\begin{equation} \label{ak}
a_{k} = 2\int_1^\infty x^{-\frac{3}{4}}( \frac{(\frac{\log(x)}{2})^{2k-2}}{(2k-2)!} - \frac{(\frac{\log(x)}{2})^{2k}}{(2k)!4}) \sum_{n=1}^\infty e^{-\pi n^2 x }\, dx
\end{equation}
and 
\begin{equation} \label{a0}
a_0 = 1-2\int_1^\infty  \frac{x^{-\frac{3}{4}}}{4} \sum_{n=1}^\infty e^{-\pi n^2 x }\, dx.
\end{equation}
To the above equations for each $n$ we apply
\begin{equation} \label{each n}
\int_1^\infty x^{-\frac{3}{4}} (\frac{\log(x)}{2})^{2k}e^{-\pi n^2 x }\, dx = 2\int_n^\infty  \frac{\log(\frac{n}{x})^{2k}}{\sqrt{n}} \frac{e^{-\pi x^2}}{\sqrt{x}}\,dx
\end{equation}
where we have used the change of variables $x \mapsto \frac{x^2}{n^2}$. 
Thus for $k \geq 0$ formula \eqref{ak} becomes
\[
a_{k+1} =4 \sum_{M=1}^\infty  \int_M^{M+1} (\sum_{n=1}^M \frac{e^{-\pi x^2}}{\sqrt{x}} (\frac{\log(\frac{n}{x})^{2k})}{(2k)!\sqrt{n}}- \frac{\log(\frac{n}{x})^{2k+2})}{(2k+2)!4\sqrt{n}})\, dx
\]
and formula \eqref{a0} becomes
\begin{equation}\label{a0 becomes}
a_0 = 1-\sum_{M=1}^\infty\int_M^{M+1} \frac{e^{-\pi x^2 }}{\sqrt{x}} \sum_{n=1}^M \frac{1}{\sqrt{n}} \, dx.
\end{equation}
Now to \eqref{a0 becomes} we apply 
\[
1=\int_{-\infty}^\infty e^{-\pi x^2}\, dx
\]
and obtain 
\[
a_0 = \int_{-1}^1 e^{-\pi x^2}\, dx+ \sum_{M=1}^\infty\int_M^{M+1} \frac{e^{-\pi x^2 }}{\sqrt{x}} (2\sqrt{x}-\sum_{n=1}^M \frac{1}{\sqrt{n}}) \, dx.
\]
This completes the proof.
\end{proof}

We rewrite the formulas in Lemma \ref{a integral} as  
\[
a_{k+1} =4  \int_1^{\infty}  \frac{e^{-\pi x^2}}{\sqrt{x}} L(x;k)\, dx
\]
and 
\[
a_{0} = \int_{-1}^1 e^{-\pi x^2}\, dx+\int_1^{\infty}  \frac{e^{-\pi x^2}}{\sqrt{x}} B(x)\, dx
\]
for the functions $L(x;k)$ and $B(x)$ defined next.
\begin{definition}For $x,k \in \mathbb{R}$, define
\[
L(x;k) =  \sum_{n=1}^{\lfloor x \rfloor}  (\frac{\log(\frac{n}{x})^{2k}}{(2k)!\sqrt{n}} - \frac{\log(\frac{n}{x})^{2k+2}}{(2k+2)!4\sqrt{n}})
\]
and 
\[
B(x) = 2\sqrt{x} - \sum_{n=1}^{\lfloor x \rfloor} \frac{1}{\sqrt{n}}.
\]
\end{definition}

\begin{lemma}
For integer $k \geq1$, the function $L(x;k)$ has the bound for $x \geq e$
\[
|L(x;k)| \leq 2x\log(x)^{2k}. 
\]
For integer $k \geq 1$, $L(x;k)$ as a function of $x$ is $(2k-1)$-differentiable on $(0,\infty)$. 
\end{lemma}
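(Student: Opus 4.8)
The statement has two independent parts, which I would treat separately.

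For the bound, the plan is a direct term-by-term estimate. For $1 \le n \le \lfloor x \rfloor$ we have $1/x \le n/x \le 1$, so $\log(n/x) \in [-\log x, 0]$ and hence $|\log(n/x)| \le \log x$. For $x \ge e$ we have $\log x \ge 1$, so $|\log(n/x)|^{2k} \le \log(x)^{2k}$ and $|\log(n/x)|^{2k+2} \le \log(x)^{2k+2}$. Applying the triangle inequality to the definition of $L(x;k)$ gives
\[
|L(x;k)| \le \Big(\frac{\log(x)^{2k}}{(2k)!} + \frac{\log(x)^{2k+2}}{4\,(2k+2)!}\Big)\sum_{n=1}^{\lfloor x\rfloor}\frac{1}{\sqrt n}.
\]
I would then use the elementary estimate $\sum_{n=1}^{\lfloor x\rfloor} n^{-1/2} \le 2\sqrt x$ (by comparison with $\int_0^x t^{-1/2}\,dt$), reducing the claim, after dividing by $2\sqrt x\,\log(x)^{2k}$, to the inequality
\[
\frac{1}{(2k)!} + \frac{\log(x)^{2}}{4\,(2k+2)!} \le \sqrt x.
\]
Since both terms on the left decrease as $k$ grows, the left side is largest at $k=1$, so it suffices to verify $\tfrac12 + \tfrac{1}{96}\log(x)^2 \le \sqrt x$ for $x \ge e$; this follows by checking positivity of the difference at $x=e$ and positivity of its derivative throughout (because $24\sqrt x > \log x$ for $x\ge e$), a short monotonicity argument.

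For the differentiability, the key observation is that the only obstruction to smoothness is the jump in the summation range at integer arguments. On each open interval $(m,m+1)$ the upper limit $\lfloor x\rfloor = m$ is constant, so $L(x;k)$ is a finite sum of the functions $\log(n/x)^{2k}$ and $\log(n/x)^{2k+2}$, each $C^\infty$ on $(0,\infty)$; hence $L$ is $C^\infty$ there. At an integer $x=m$, passing from the left interval to the right one adds exactly the single term
\[
h_m(x) = \frac{\log(m/x)^{2k}}{(2k)!\sqrt m} - \frac{\log(m/x)^{2k+2}}{4\,(2k+2)!\sqrt m},
\]
while all terms with $n<m$ remain smooth across $m$. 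I would therefore show that $h_m$ vanishes to order $2k$ at $x=m$: since $g(x):=\log(m/x)$ satisfies $g(m)=0$ and $g'(m)=-1/m\neq 0$, the factor $g(x)^{2k}$ has a Taylor expansion at $m$ beginning with $(x-m)^{2k}$ (and $g(x)^{2k+2}$ begins at order $2k+2$), so every derivative of $h_m$ of order $0,1,\dots,2k-1$ vanishes at $m$. Consequently the left- and right-hand derivatives of $L$ up to order $2k-1$ agree at every integer, and $L$ is $(2k-1)$-times differentiable on $(0,\infty)$.

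The main obstacle is the differentiability at the integer points: one must verify that adjoining the $n=m$ summand does not disturb the first $2k-1$ derivatives. This reduces cleanly to the vanishing-order computation for $h_m$ above; once the order-$2k$ vanishing of $\log(m/x)^{2k}$ is established, the matching of one-sided derivatives is immediate. The bound, by contrast, is a routine estimate whose only subtlety is the final elementary inequality for $x \ge e$.
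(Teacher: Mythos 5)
Your proposal is correct and follows essentially the same route as the paper: the differentiability argument (smoothness on each interval $(m,m+1)$ plus the order-$2k$ vanishing at $x=m$ of the single term $\log(m/x)^{2k}/((2k)!\sqrt m)-\log(m/x)^{2k+2}/(4(2k+2)!\sqrt m)$ that is gained when crossing an integer) is exactly the paper's. For the bound the paper merely asserts it ``follows immediately from the definition,'' whereas you supply the actual estimate via $\sum_{n\le x} n^{-1/2}\le 2\sqrt x$ and the elementary inequality $\tfrac12+\tfrac{1}{96}\log(x)^2\le\sqrt x$ for $x\ge e$; this is a genuine (and welcome) completion of a step the paper skips, and your computation checks out.
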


\begin{proof}
The bound on $L(x;k)$ follows immediately from the definition. It is also follows from the definition that $L(x;k)$ is piecewise-smooth for positive $x$ except possibly at the integers. We prove that $L(x;k)$ is $(2k-1)$-differentiable at integer $x=M> 0$. Now the function
\[
f_1(x) = \sum_{n=1}^{M}(\frac{\log(\frac{n}{x})^{2k}}{(2k)!\sqrt{n}} - \frac{\log(\frac{n}{x})^{2k+2}}{(2k+2)!4\sqrt{x}})
\] 
is equal to $L(x;k)$ for $x \in [M,M+1)$ and the function 
\[
f_2(x) = \sum_{n=1}^{M-1}(\frac{\log(\frac{n}{x})^{2k}}{(2k)!\sqrt{n}} - \frac{\log(\frac{n}{x})^{2k+2}}{(2k+2)!4\sqrt{n}})
\]
is equal to $L(x;k)$ for $x \in [M-1,M)$.
Then the first $2k-1$ derivatives of the difference
\[
f_1(x) - f_2(x) = \frac{\log(\frac{M}{x})^{2k}}{(2k)!\sqrt{M}} - \frac{\log(\frac{M}{x})^{2k+2}}{(2k+2)!4\sqrt{M}}
\]
are each equal to 0 at $x=M$, including $f_1(M) - f_2(M)=0$. This completes the proof.
\end{proof}

The functions defined next and Lemma \ref{anti-derivative} will be used to prove the positivity of $L(x;k)$ in Theorem \ref{LB positive}. 
\begin{definition} \label{UV} For $x,k$ and $n\in \mathbb{R}$ with $x$ and $n\geq0$, we define the functions
\begin{align*}
U(x;n,k) &=\frac{\log(\frac{x}{n})^{2k}}{2k!\sqrt{n}} -\frac{\sqrt{n}}{2}(\frac{\log(\frac{x}{n})^{2k+2}}{(2k+2)!}+2\frac{\log(\frac{x}{n})^{2k+1}}{(2k+1)!})\\
V(x;n,k) &= \frac{\sqrt{n+1}}{2}(\frac{\log(\frac{x}{n+1})^{2k+2}}{(2k+2)!} +2\frac{\log(\frac{x}{n+1})^{2k+1}}{(2k+1)!}) - \frac{\log(\frac{x}{n+1})^{2k+2}}{(2k+2)!4\sqrt{n+1}}.
\end{align*}
\end{definition}

\begin{lemma}\label{anti-derivative}
For integer $k \geq0$, we have the anti-derivatives 
 \[
\int \frac{\log(\frac{z}{x})^{2k}}{(2k)!\sqrt{z}} \, dz = 2 \sqrt{z}\sum_{h=0}^{2k}\frac{(-1)^h 2^h\log(\frac{z}{x})^{2k-h}}{(2k-h)!}+C
\]
and 
\[
\int \frac{\log(\frac{z}{x})^{2k}}{(2k)!\sqrt{z}}- \frac{\log(\frac{z}{x})^{2k+2}}{(2k+2)!4\sqrt{z}} \, dz = -\frac{\sqrt{z}}{2}(\frac{\log(\frac{z}{x})^{2k+2}}{(2k+2)!} - 2 \frac{\log(\frac{z}{x})^{2k+1}}{(2k+1)!} )+C.
\]
\end{lemma}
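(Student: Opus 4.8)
The plan is to verify both identities by differentiating the proposed antiderivatives and checking that each derivative reproduces the corresponding integrand; since all functions involved are smooth in $z$ on $(0,\infty)$, which is connected, matching derivatives establishes the formulas up to the additive constant $C$, exactly as indefinite integration requires. Throughout I would abbreviate $u = \log(\frac{z}{x})$, so that $\frac{du}{dz} = \frac{1}{z}$ and hence $\frac{d}{dz}\frac{u^{m}}{m!} = \frac{u^{m-1}}{(m-1)!}\frac{1}{z}$ for each integer $m \geq 1$.

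For the first formula I would apply the product rule to $2\sqrt{z}\,S$, where $S = \sum_{h=0}^{2k} \frac{(-1)^h 2^h u^{2k-h}}{(2k-h)!}$. Differentiating the factor $2\sqrt{z}$ contributes $\frac{1}{\sqrt{z}}S$, while differentiating $S$ and multiplying by $2\sqrt{z}$ contributes $\frac{2}{\sqrt{z}}\sum_{h=0}^{2k-1}\frac{(-1)^h 2^h u^{2k-1-h}}{(2k-1-h)!}$, the top term $h=2k$ dropping out because it carries a factor $2k-h=0$. After the index shift $j=h+1$ this second sum becomes $-\frac{1}{\sqrt{z}}\sum_{j=1}^{2k}\frac{(-1)^j 2^j u^{2k-j}}{(2k-j)!}$, which cancels every term of $\frac{1}{\sqrt{z}}S$ except the $h=0$ term. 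What survives is precisely $\frac{u^{2k}}{(2k)!\sqrt{z}}$, the integrand. The mechanism is thus a telescoping cancellation between the derivative of the boundary factor $2\sqrt{z}$ and the derivatives of the summands; this is also the fingerprint of a repeated integration-by-parts reduction, in which integrating $z^{-1/2}$ produces $2\sqrt{z}$ and each step lowers the power of $u$ by one while introducing the factor $-2$, which accounts for the $(-1)^h 2^h$ pattern.

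For the second formula I would differentiate $-\frac{\sqrt{z}}{2}\bigl(\frac{u^{2k+2}}{(2k+2)!} - 2\frac{u^{2k+1}}{(2k+1)!}\bigr)$ directly. The product rule splits the result into a $\frac{1}{2\sqrt{z}}$ piece coming from $\sqrt{z}$ and a $\frac{1}{\sqrt{z}}$ piece coming from the bracket; the two intermediate $u^{2k+1}$ contributions cancel against one another, leaving exactly $\frac{u^{2k}}{(2k)!\sqrt{z}} - \frac{u^{2k+2}}{(2k+2)!4\sqrt{z}}$, as claimed. Alternatively I could obtain the second formula from the first by subtracting one quarter of the first formula at index $k+1$ from the first formula at index $k$ and observing that the two sums telescope down to the two leading terms; either route is a short computation.

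I do not anticipate a substantive obstacle, since these are antiderivative statements and differentiation converts them into finite algebraic cancellations. The only thing to watch is the bookkeeping: confirming that the $h=2k$ term genuinely vanishes, that the index shift $j=h+1$ realigns the factorials and signs correctly, and that the factor $2\cdot 2^{h}=2^{h+1}$ produced after multiplying by $2\sqrt{z}$ is tracked accurately. Once these are checked, the telescoping leaves only the single surviving term in each case, completing the verification.
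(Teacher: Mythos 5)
Your verification is correct. You prove the lemma by differentiating each proposed antiderivative and checking that the telescoping cancellation leaves exactly the integrand; the paper instead runs the computation in the forward direction, obtaining the right-hand sides by successive integration by parts with $u=\sqrt{z}$ at every step (which is precisely the mechanism you identify as the ``fingerprint'' behind the $(-1)^h 2^h$ pattern). The two routes are mirror images of one another: the paper's is constructive and explains where the formulas come from, while yours is a self-contained check that is, if anything, more airtight for a reader, since an antiderivative claim is settled the moment the derivative of the right-hand side matches the integrand on the connected domain $(0,\infty)$. Your bookkeeping is accurate at the two delicate points --- the vanishing of the $h=2k$ term under differentiation and the sign flip produced by the shift $j=h+1$ together with the extra factor of $2$ --- and the cancellation of the two $u^{2k+1}$ contributions in the second identity is exactly right. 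Nothing is missing.
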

\begin{proof}
 This follows from successive integration by parts 
\[
\int u \,dv = u v - \int v \,du
\]
using $u=\sqrt{z}$ each time. 
\end{proof}

\begin{theorem} \label{LB positive}
We have the equality
\begin{equation} \label{L as UV}
L(x;k) = V(x;0,k) + U_{\lfloor x \rfloor,k}(x)+ \sum_{n=1}^{\lfloor x \rfloor-1} V(x;n,k)+ U(x;n,k).
\end{equation}
 For $x \geq 1$ and $k \geq0$, 
\[
L(x;k) \geq0\,\,\,\,\,\,\,\text{ and} \,\,\,\,\,\,\,\, B(x) \geq0.
\]
\end{theorem}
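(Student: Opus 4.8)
The plan is to establish the identity \eqref{L as UV} by a telescoping computation and then to read positivity off it by checking that each of the three kinds of summand is non-negative. Write $M=\lfloor x\rfloor$ and abbreviate the two pieces of the $n$-th summand of $L(x;k)$ by $h(z)=\frac{\log(x/z)^{2k}}{(2k)!\sqrt z}$ and $g(z)=\frac{\log(x/z)^{2k+2}}{(2k+2)!4\sqrt z}$; since $\log(n/x)^{2j}=\log(x/n)^{2j}$, the integrand of the second formula in Lemma \ref{anti-derivative} is $F:=h-g$ and its antiderivative there is $G(z)=-\frac{\sqrt z}{2}\bigl(\frac{\log(x/z)^{2k+2}}{(2k+2)!}+2\frac{\log(x/z)^{2k+1}}{(2k+1)!}\bigr)$, so that $G'=F$.

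For \eqref{L as UV} I would first read off from Definition \ref{UV} the relations $U(x;n,k)=h(n)+G(n)$ and $V(x;n,k)=-G(n+1)-g(n+1)$. Substituting these into the right-hand side of \eqref{L as UV}, the interior sum contributes $\sum_{n=1}^{M-1}\bigl(G(n)-G(n+1)\bigr)=G(1)-G(M)$, which is cancelled exactly by the $-G(1)$ inside $V(x;0,k)$ and the $+G(M)$ inside $U(x;M,k)$. What remains is $\sum_{n=1}^{M}h(n)-\sum_{n=1}^{M}g(n)=\sum_{n=1}^{M}F(n)=L(x;k)$, which is \eqref{L as UV}.

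For the positivity I would treat the three pieces in turn. The $V$-terms are non-negative by themselves: factoring out $\frac{\log(x/(n+1))^{2k+1}}{(2k+2)!}$ gives
\[
V(x;n,k)=\frac{\log(x/(n+1))^{2k+1}}{(2k+2)!}\Bigl[\frac{2n+1}{4\sqrt{n+1}}\log\tfrac{x}{n+1}+(2k+2)\sqrt{n+1}\Bigr],
\]
and for $0\le n\le M-1$ we have $x\ge n+1$, so $\log\frac{x}{n+1}\ge0$ and every factor is non-negative. For the paired terms with $1\le n\le M-1$, using $G(n)-G(n+1)=-\int_n^{n+1}F$ and $F=h-g$ I would rewrite
\[
U(x;n,k)+V(x;n,k)=\Bigl(h(n)-\int_n^{n+1}h(z)\,dz\Bigr)+\Bigl(\int_n^{n+1}g(z)\,dz-g(n+1)\Bigr).
\]
On $[n,n+1]\subseteq[1,x]$ both $h$ and $g$ are products of the non-negative decreasing functions $z\mapsto\log(x/z)^{2j}$ and $z\mapsto z^{-1/2}$, hence themselves decreasing; therefore $\int_n^{n+1}h\le h(n)$ and $\int_n^{n+1}g\ge g(n+1)$, so both bracketed quantities are $\ge0$. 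The leftover boundary term $U(x;\lfloor x\rfloor,k)$ is handled on its own: factoring out the non-negative $\frac{\log(x/M)^{2k}}{(2k)!\sqrt M}$ reduces its non-negativity to $\frac{M\ell}{2k+1}+\frac{M\ell^2}{2(2k+1)(2k+2)}\le1$ with $\ell=\log(x/M)\in[0,\log(1+\tfrac1M))$; the left side increases in $\ell$ and decreases in $k$, so the worst case is $k=0$ and $\ell\to\log(1+\tfrac1M)$, where (after using $M\log(1+\tfrac1M)<1$) it reduces to $\log(1+\tfrac1M)\le\frac{4}{4M+1}$, i.e.\ $\log(1+u)\le\frac{4u}{4+u}$ for $0<u\le1$, an elementary inequality proved by one monotonicity check. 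Finally $B(x)\ge0$ is the classical comparison $\sum_{n=1}^{M}\frac{1}{\sqrt n}\le\int_0^{M}\frac{dz}{\sqrt z}=2\sqrt M\le2\sqrt x$, using $\frac{1}{\sqrt n}\le\int_{n-1}^{n}z^{-1/2}\,dz$.

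I expect the boundary term $U(x;\lfloor x\rfloor,k)$ to be the main obstacle. Unlike the interior pairs, it is not controlled by a monotonicity comparison, and its non-negativity is genuinely tight: for $k=0$ the quantity $M\log(1+\tfrac1M)+\tfrac14 M\log(1+\tfrac1M)^2$ tends to $1$ from below as $M\to\infty$, so the precise constant in $\log(1+u)\le\frac{4u}{4+u}$ is exactly what is needed. The interior estimate is by contrast robust once one observes that $h$ and $g$ are monotone on each unit interval.
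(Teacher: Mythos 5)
Your proof is correct and follows essentially the same route as the paper: the same telescoping of $L(x;k)$ into $V(x;0,k)$, the interior pairs $U(x;n,k)+V(x;n,k)$ (made non-negative by the same monotone-comparison argument using the antiderivatives of Lemma \ref{anti-derivative}), and the boundary term $U(x;\lfloor x\rfloor,k)$, together with the same integral comparison for $B(x)$. The only cosmetic difference is the final elementary inequality controlling the boundary term: the paper's Lemma \ref{UM positive} proves $2y\ge\tfrac12\log(1+y)^2+2\log(1+y)$ directly by differentiation, while you route through $\log(1+u)\le\frac{4u}{4+u}$; the two are equivalent in strength and both close the same tight estimate.
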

\begin{proof}
We first prove $L(x;k) \geq0$.
Suppose $M\leq x < M+1$ where $ M\geq1$ is an integer.
By definition 
\[
L(x;k) =  \sum_{n=1}^{M}  (\frac{\log(\frac{n}{x})^{2k}}{(2k)!\sqrt{n}} - \frac{\log(\frac{n}{x})^{2k+2}}{(2k+2)!4\sqrt{n}}).
\] 
From the definitions of $U(x;n,k)$ and $V(x;n,k)$ we have for $n \geq1$
\[
U(x;n,k) +V(x;n-1,k) =  \frac{\log(\frac{n}{x})^{2k}}{(2k)!\sqrt{n}} - \frac{\log(\frac{n}{x})^{2k+2}}{(2k+2)!4\sqrt{n}}.
\]
This proves equation \eqref{L as UV}. We also have for $n \geq1$
\begin{align}
&U(x;n,k)+V(x;n,k) =  \nonumber \\ 
&\left(\frac{\log(\frac{n}{x})^{2k}}{(2k)!\sqrt{n}} -\int_n^{n+1} \frac{\log(\frac{z}{x})^{2k}}{(2k)!\sqrt{z}} \, dz \right) + 
\left(\int_{n}^{n+1} \frac{\log(\frac{z}{x})^{2k+2}}{(2k+2)!4\sqrt{z}}\, dz - \frac{\log(\frac{n+1}{x})^{2k+2}}{(2k+2)!4\sqrt{n+1}}\right).\label{U+V positive}
\end{align}
The above equation follows from Lemma \ref{anti-derivative}. 
And each expression in the parentheses of \eqref{U+V positive} is positive because the functions of $z$
\[
\frac{\log(\frac{z}{x})^{2k}}{(2k)!\sqrt{z}} \,\,\text{ and } \,\,-\frac{\log(\frac{z}{x})^{2k+2}}{(2k+2)!4\sqrt{z}}
\]
are positive decreasing and negative increasing, respectively, for $z \in (1,M)$ and $1 \leq n \leq M-1$. This shows that for $k\geq0$ and $1\leq n \leq M-1$ 
\[
U(x;n,k)+V(x;n,k)\geq 0.
\]
Now 
\[
V(x;0,k) =  \frac{\log(x)^{2k+2}}{4(2k+2)!} +\frac{\log(x)^{2k+1}}{(2k+1)!}\geq0
\]
for $x \geq1$, and we prove that $U(x;M,k)$ is positive in Lemma \ref{UM positive} for $M\leq x < M+1$. This proves the positivity of $L(x;k)$. 

For $B(x)$, we compare the integral with $M \leq x <M+1$
\[
\sum_{n=1}^M \frac{1}{\sqrt{n}}\leq \int_0^M \frac{dz}{\sqrt{z}} = 2\sqrt{M} \leq 2\sqrt{x}
\] 
which completes the proof.
\end{proof}
This is the Lemma used in Theorem \ref{LB positive}. 
\begin{lemma} \label{UM positive}For integer $M \geq1$ and $k \geq0$, and $M \leq x \leq M+1$, 

\[
U(x;M,k)=\frac{\log(\frac{M}{x})^{2k}}{(2k)!\sqrt{M}}-\frac{\sqrt{M}}{2}(\frac{\log(\frac{M}{x})^{2k+2}}{(2k+2)!} - 2 \frac{\log(\frac{M}{x})^{2k+1}}{(2k+1)!} ) \geq 0.
\]
\end{lemma}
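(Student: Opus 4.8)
The plan is to show directly that $U(x;M,k)\ge 0$; the displayed rewriting in the statement is merely the substitution $\log(\frac{M}{x}) = -\log(\frac{x}{M})$ into Definition \ref{UV} with $n=M$, which flips the sign of the odd power $\log(\frac{M}{x})^{2k+1}$ while leaving the even powers $\log(\frac{M}{x})^{2k}$ and $\log(\frac{M}{x})^{2k+2}$ unchanged, so the two expressions agree. For the positivity I would set $y = \log(\frac{x}{M})$ and observe that for $M \le x \le M+1$ we have $0 \le y \le \log(1+\frac{1}{M})$.

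With this notation $U(x;M,k) = \frac{y^{2k}}{(2k)!\sqrt{M}} - \frac{\sqrt{M}}{2}\bigl(\frac{y^{2k+2}}{(2k+2)!} + \frac{2y^{2k+1}}{(2k+1)!}\bigr)$. Since $y \ge 0$, I would factor out the nonnegative quantity $\frac{y^{2k}}{(2k)!}$, using $\frac{(2k)!}{(2k+2)!} = \frac{1}{(2k+2)(2k+1)}$ and $\frac{(2k)!}{(2k+1)!} = \frac{1}{2k+1}$, to reduce the claim (after multiplying the resulting bracket by $\sqrt{M}>0$) to the single scalar inequality
\[
\frac{y^2}{(2k+2)(2k+1)} + \frac{2y}{2k+1} \le \frac{2}{M}.
\]
Because $k \ge 0$ forces $2k+1 \ge 1$ and $(2k+2)(2k+1) \ge 2$, and because $y \ge 0$, the left side is at most $\frac{y^2}{2} + 2y$, so it suffices to prove $\frac{y^2}{2} + 2y \le \frac{2}{M}$. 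This expression is increasing in $y \ge 0$ and $y \le \log(1+\frac{1}{M})$, so, writing $t = \frac{1}{M}\in(0,1]$, it remains to show $h(t) := 2t - 2\log(1+t) - \tfrac{1}{2}\log(1+t)^2 \ge 0$.

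The main obstacle — and essentially the only content — is this last inequality, because the crude bound $\log(1+\frac{1}{M}) \le \frac{1}{M}$ gives only $2y \le \frac{2}{M}$ and leaves no room for the quadratic term $\frac{y^2}{2}$. I would instead argue by calculus: $h(0)=0$, and a direct computation gives
\[
h'(t) = 2 - \frac{2}{1+t} - \frac{\log(1+t)}{1+t} = \frac{2t - \log(1+t)}{1+t} \ge 0,
\]
since $\log(1+t) \le t \le 2t$ for $t \ge 0$. Hence $h$ is nondecreasing on $[0,1]$ and $h(t) \ge h(0) = 0$, which closes the argument uniformly in $M$. (Alternatively one could use the Taylor-remainder estimate $\log(1+t) \le t - \frac{t^2}{2} + \frac{t^3}{3}$ to settle $M \ge 2$ and check $M=1$ numerically, but the monotonicity argument is cleaner.)
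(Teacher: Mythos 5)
Your proposal is correct and follows essentially the same route as the paper: factor out the nonnegative even power $\log(\tfrac{M}{x})^{2k}/(2k)!$, bound the remaining bracket using the worst case $k=0$ and $y=\log(1+\tfrac{1}{M})$, and reduce to the scalar inequality $2t - 2\log(1+t) - \tfrac{1}{2}\log(1+t)^2 \ge 0$, proved by checking the value at $t=0$ and the sign of the derivative via $\log(1+t)\le t\le 2t$. The only cosmetic difference is that you substitute $y=\log(\tfrac{x}{M})$ up front while the paper keeps $\log(\tfrac{M}{x})$ and sets $M=1/y$ later; the key inequality and its monotonicity proof are identical.
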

\begin{proof}
We factor the left side of inequality in the lemma statement to obtain
\[
\frac{\log(\frac{M}{x})^{2k}}{(2k)!\sqrt{M}}(1-\frac{M}{2}(\frac{\log(\frac{M}{x})^{2}}{(2k+2)(2k+1)} - 2 \frac{\log(\frac{M}{x})}{(2k+1)} ).
\]
Now using the bound on $x$ we get 
\[
1-\frac{M}{2}(\frac{\log(\frac{M}{x})^{2}}{(2k+2)(2k+1)} - 2 \frac{\log(\frac{M}{x})}{(2k+1)}) \geq 1-\frac{M}{2}(\frac{\log(1+\frac{1}{M})^{2}}{(2k+2)(2k+1)} +2 \frac{\log(1+\frac{1}{M})}{(2k+1)} ).
\]
Now let $M =\frac{1}{y}$ where $0 \leq y \leq 1$. For $k \geq 0$ we have
\[
1-\frac{1}{2y}(\frac{\log(1+y)^{2}}{(2k+2)(2k+1)} + 2 \frac{\log(1+y)}{(2k+1)}) \geq 1-\frac{1}{2y}(\frac{\log(1+y)^{2}}{2} + 2 \log(1+y)).
\]
We then show that 
\[
2y - (\frac{\log(1+y)^{2}}{2} +2 \log(1+y))\geq 0.
\]
The above inequality is true when $y=0$, and we show that the left side is increasing in $y$. We differentiate with respect to $y$ and see that we next must show that 
\[
2-(\frac{\log(1+y)}{1+y} + \frac{2}{1+y}) \geq 0.
\]
Multiplying by $(1+y)$ gives
\[
2(1+y)-(\log(1+y) + 2) \geq 0
\]
and this is true for because for  $y\geq 0$ we have  
\[
2y \geq y \geq  \log(1+y).
\]
\end{proof}

\begin{corollary} \label{a positive}

For $k\geq0$
\[
a_{k+1}  = 4\int_1^\infty \frac{e^{-\pi x^2}}{\sqrt{x}} L(x;k) \, dx \geq0
\]
and
\[
a_0= \int_{-1}^1 e^{-\pi x^2}\, dx+\int_1^\infty \frac{e^{-\pi x^2}}{\sqrt{x}} B(x)  \, dx\geq0.
\]
\end{corollary}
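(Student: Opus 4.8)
The plan is to treat this as an immediate consequence of the two results already in hand: the integral representations of Lemma~\ref{a integral} and the positivity statements of Theorem~\ref{LB positive}. First I would recall that Lemma~\ref{a integral}, after the rewriting carried out just below its proof, supplies exactly the two identities
\[
a_{k+1} = 4\int_1^\infty \frac{e^{-\pi x^2}}{\sqrt{x}} L(x;k)\, dx, \qquad a_0 = \int_{-1}^1 e^{-\pi x^2}\, dx + \int_1^\infty \frac{e^{-\pi x^2}}{\sqrt{x}} B(x)\, dx,
\]
so the equalities asserted in the corollary require no new computation.

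For the inequalities, the key observation is that the weight $\frac{e^{-\pi x^2}}{\sqrt{x}}$ is strictly positive for every $x>0$. By Theorem~\ref{LB positive} the remaining factors satisfy $L(x;k)\geq 0$ and $B(x)\geq 0$ for all $x\geq 1$ and all integers $k\geq 0$. Hence on $[1,\infty)$ each integrand is a product of nonnegative functions, so it is nonnegative pointwise, and monotonicity of the integral gives $\int_1^\infty \frac{e^{-\pi x^2}}{\sqrt{x}} L(x;k)\, dx \geq 0$ and $\int_1^\infty \frac{e^{-\pi x^2}}{\sqrt{x}} B(x)\, dx \geq 0$. Multiplying the first by the positive constant $4$ yields $a_{k+1}\geq 0$. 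For $a_0$ I would then add the term $\int_{-1}^1 e^{-\pi x^2}\, dx$, which is the integral of a strictly positive function over a nondegenerate interval and is therefore itself positive; adding a positive quantity to a nonnegative one gives $a_0 \geq 0$.

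The one point deserving a little care is confirming that the improper integrals over $[1,\infty)$ genuinely converge, so that the pointwise-nonnegativity argument is legitimate. Here I would invoke the growth bound $|L(x;k)| \leq 2x\log(x)^{2k}$ for $x\geq e$ proved earlier: the factor $\frac{e^{-\pi x^2}}{\sqrt{x}}$ times a function of at most polynomial-times-logarithmic growth is integrable on $[1,\infty)$ because the Gaussian $e^{-\pi x^2}$ decays faster than any such factor grows. For $B(x)$ the estimate $0\leq B(x)\leq 2\sqrt{x}$, immediate from its definition, bounds the integrand by $2e^{-\pi x^2}$ and gives integrability in the same way. Thus the integrands are absolutely integrable as well as nonnegative, and the stated inequalities follow.

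Since all the analytic content, namely the integral representation, the positivity of $L$ and $B$, and the growth estimate, is already furnished by the preceding lemmas and theorem, I do not anticipate any genuine obstacle: the corollary is essentially a packaging of those facts. If anything, the subtlest step is remembering to cite the convergence bound before passing to the conclusion, but even that is routine given the Gaussian decay.
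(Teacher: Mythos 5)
Your proposal is correct and matches the paper's proof, which simply cites Lemma~\ref{a integral} for the integral representations and Theorem~\ref{LB positive} for the nonnegativity of $L(x;k)$ and $B(x)$. Your extra remarks on convergence of the improper integrals are a sensible (if routine) addition that the paper leaves implicit.
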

\begin{proof}
These statements follow from Lemma \ref{a integral} and Theorem \ref{LB positive}.

\end{proof}

\section{The polynomials $p(x;n)$}
The following definition of the polynomials $p(x;n)$ is motivated by Lemma \ref{int by parts}.
\begin{definition}For integer $n \geq0$ define the polynomial $p(x;n)$ by
\[
p(-\pi x^2;n+1) = 2\sqrt{x}e^{\pi x^2}\frac{d}{dx}( p(-\pi x^2;n) \sqrt{x} e^{-\pi x^2}),\, \, \, \, \,
\]
\[
 p(x;0)=1.
\]
\end{definition}
The first few $p(x;n)$ are 
\begin{align*}
p(x;0)&=1\\
p(x;1)&=1+4x \\ 
p(x;2) &= 1+24x+ 16x^2\\ 
  p(x;3) &= 1+124x+240x^2+64x^3.
\end{align*}
\begin{lemma}
For integer $n\geq 0$ and $x \in \mathbb{C}$,
\begin{equation} \label{p as sum}
p(x;n) =\sum_{m=0}^\infty \frac{x^m}{m!} \sum_{j=0}^m (-1)^{j-m}{m \choose j}(4j+1)^n
\end{equation}
\begin{equation}\label{p with e}
p(x;n) = e^{-x}\sum_{j=0}^\infty (4j+1)^n \frac{x^j}{j!}
\end{equation}
\end{lemma}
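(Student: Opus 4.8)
The plan is to first collapse the two-step, variable-changing recursion of the definition into a clean first-order differential recursion in a single variable, and then to recognize the resulting operator as the conjugate of a diagonal operator on the monomial basis. Concretely, for Step~1 I would set $g(x) = p(-\pi x^2;n)\sqrt{x}\,e^{-\pi x^2}$ and expand $2\sqrt{x}\,e^{\pi x^2}\frac{d}{dx}g(x)$ using the product and chain rules, writing $\frac{d}{dx}p(-\pi x^2;n) = -2\pi x\,p'(-\pi x^2;n)$ where $p'$ denotes the derivative in the first argument. After the exponentials cancel and the $\sqrt{x}$ factors are cleared, substituting $y=-\pi x^2$ (so that $-4\pi x^2 = 4y$) makes every surviving term a function of $y$ alone, and the definition reduces to
\[
p(y;n+1) = 4y\,p'(y;n) + (1+4y)\,p(y;n).
\]
Renaming $y$ back to $x$, this reads $p(x;n+1) = T\,p(x;n)$ with $T = 4x\frac{d}{dx} + 4x + 1$ and $p(x;0)=1$; I would record a one-line check against the listed values $p(x;1),p(x;2),p(x;3)$. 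This algebraic reduction is the step I expect to demand the most care, since the chain rule through $-\pi x^2$ together with the bookkeeping of the $\sqrt{x}$ and exponential factors is exactly where a stray sign or factor would appear.

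For Step~2 I would introduce the simpler operator $E = 4x\frac{d}{dx} + 1$ and observe the key fact that each monomial is an eigenfunction, $E\,x^j = (4j+1)\,x^j$. A short computation then gives the conjugation identity $T f = e^{-x}E(e^x f)$ for every $f$, that is $T = M_{e^{-x}}\,E\,M_{e^x}$ with $M$ denoting multiplication. Because the conjugation telescopes, this yields $T^n = M_{e^{-x}}\,E^n\,M_{e^x}$.

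For Step~3, applying $T^n$ to $p(x;0)=1$ gives
\[
p(x;n) = e^{-x}E^n(e^x) = e^{-x}\sum_{j=0}^\infty (4j+1)^n\frac{x^j}{j!},
\]
since $E^n$ acts on $\tfrac{x^j}{j!}$ by the scalar $(4j+1)^n$; this is \eqref{p with e}. For \eqref{p as sum} I would multiply the series for $e^{-x}$ by the series just obtained and collect the coefficient of $x^m$ by setting $i+j=m$, producing $\frac{1}{m!}\sum_{j=0}^m(-1)^{m-j}\binom{m}{j}(4j+1)^n$; the identity $(-1)^{m-j}=(-1)^{j-m}$ then gives the stated form. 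As a consistency remark, the inner sum is the $m$-th finite difference of the degree-$n$ polynomial $(4j+1)^n$ evaluated at $j=0$, so it vanishes for $m>n$; hence the outer sum is in fact finite and $p(x;n)$ is genuinely a polynomial of degree $n$ (with leading coefficient $4^n$), consistent with the tabulated examples.
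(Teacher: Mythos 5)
Your proof is correct, but it runs in the opposite direction from the paper's and uses a different mechanism. The paper first extracts the coefficient recursion $c_{m,n+1}=(4m+1)c_{m,n}+4c_{m-1,n}$ from the definition (equivalent to your operator $T=4x\frac{d}{dx}+4x+1$), proves the closed form \eqref{p as sum} by induction on $n$ via the binomial identity $4m\binom{m-1}{k}-(4m+1)\binom{m}{k}=-(4k+1)\binom{m}{k}$, and only then verifies \eqref{p with e} by matching Taylor coefficients. You instead prove \eqref{p with e} first, by observing that $T$ is the conjugate $e^{-x}Ee^{x}$ of the operator $E=4x\frac{d}{dx}+1$, which is diagonal on monomials with eigenvalues $4j+1$, so that $T^n 1=e^{-x}E^n(e^x)$ falls out by telescoping; \eqref{p as sum} then follows from a Cauchy product. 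Your reduction of the definition to $p(y;n+1)=4y\,p'(y;n)+(1+4y)\,p(y;n)$ checks out (the $\sqrt{x}$ and exponential bookkeeping is right, and the substitution $y=-\pi x^2$ is legitimate since a polynomial identity valid for $y\le 0$ extends to all of $\mathbb{C}$), and the termwise application of $E^n$ to the entire function $e^x$ is unproblematic. What your route buys is a conceptual explanation of where the eigenvalues $(4j+1)^n$ come from and the elimination of the inductive binomial-identity verification; what the paper's route buys is that it stays entirely at the level of finitely many polynomial coefficients and never has to manipulate operators on infinite series. Your closing remark that the inner sum of \eqref{p as sum} is an $m$-th finite difference of a degree-$n$ polynomial, hence vanishes for $m>n$, is a worthwhile addition that neither formula makes explicit in the paper.
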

\begin{proof}
We denote 
\[
p(x;n) = \sum_{m=0}^\infty c_{m,n}x^m.
\]
Thus $c_{0,0}=1$ and $c_{m,0}=0$ for $m\geq1$. The definition of $p(x;n)$ implies that
\begin{equation} \label{c relation}
c_{m,n+1} = (4m+1)c_{m,n}+4c_{m-1,n}.
\end{equation}
We prove by induction on $n$ that 
\begin{equation}\label{c formula}
c_{m,n}=\sum_{j=0}^m (-1)^{j-m}{m \choose j}(4j+1)^n.
\end{equation}
It is true for $c_{m,0}$ for all $m$. Assume it is true for $c_{m,n}$ for all $m$ for some $n \geq0$. Then we substitute \eqref{c formula} into \eqref{c relation}, compare the coefficients of $(4k+1)^n$, and apply the identity
\[
4m{m-1 \choose k} - (4m+1){m \choose k} = -(4k+1){m \choose k}
\]
for $0\leq k\leq m-1$. 
This proves the induction step. 

Now the series on the right side of \eqref{p with e} is convergent for any $x \in \mathbb{C}$. We calculate its Taylor coefficients of the whole function in \eqref{p with e} at $x=0$ and see that they match \eqref{p as sum}. This completes the proof.
\end{proof}

 \begin{lemma}\label{int by parts}
For $c>0$ and integers $m\geq0$ and $n \geq0$, we have 
\begin{align*}
&\int_c^\infty \frac{e^{-\pi x^2}}{\sqrt{x}} \frac{\log( \frac{x}{c})^{m}}{m!}\, dx =  \int_c^\infty  \frac{e^{-\pi x^2}}{\sqrt{x}} \frac{(-1)^n p(-\pi x^2;n)}{2^n} \frac{\log( \frac{x}{c})^{m+n}}{(m+n)!}\, dx\\ 
\end{align*}
\end{lemma}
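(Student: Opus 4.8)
The plan is to prove the identity by induction on $n$, holding $m$ and $c$ fixed, exploiting the fact that the recursive definition of $p(x;n+1)$ is precisely an integration-by-parts relation in disguise. Write $I_n$ for the right-hand side of the claimed identity as a function of $n$. The left-hand side is exactly $I_0$, since $p(x;0)=1$ and the prefactor $\frac{(-1)^0}{2^0}$ together with the exponent $m+0$ are all trivial. Hence it suffices to establish $I_{n+1}=I_n$ for every $n\geq0$, after which the whole chain collapses to $I_0$.

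For the inductive step I would substitute the defining relation $p(-\pi x^2;n+1)=2\sqrt{x}e^{\pi x^2}\frac{d}{dx}\!\left(p(-\pi x^2;n)\sqrt{x}e^{-\pi x^2}\right)$ directly into $I_{n+1}$. The prefactor $\frac{e^{-\pi x^2}}{\sqrt{x}}$ in the integrand cancels against the $\sqrt{x}e^{\pi x^2}$ produced by the definition, and the factor $2$ reduces the power of two, leaving $I_{n+1}=\frac{(-1)^{n+1}}{2^n}\int_c^\infty \frac{d}{dx}\!\left(p(-\pi x^2;n)\sqrt{x}e^{-\pi x^2}\right)\frac{\log(x/c)^{m+n+1}}{(m+n+1)!}\,dx$. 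I would then integrate by parts, taking the logarithmic factor as $u$ and the bracketed derivative as $dv$, so that $v=p(-\pi x^2;n)\sqrt{x}e^{-\pi x^2}$ and $du=\frac{1}{x}\frac{\log(x/c)^{m+n}}{(m+n)!}\,dx$.

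The boundary term $[uv]_c^\infty$ vanishes at both ends: at the lower limit $x=c$ the factor $\log(x/c)^{m+n+1}$ is zero since $m+n+1\geq1$, while at infinity the Gaussian $e^{-\pi x^2}$ inside $v$ overwhelms both the polynomial factor $p(-\pi x^2;n)$ and the merely logarithmic growth of $u$. What survives is $-\frac{(-1)^{n+1}}{2^n}\int_c^\infty p(-\pi x^2;n)\sqrt{x}e^{-\pi x^2}\,\frac{1}{x}\frac{\log(x/c)^{m+n}}{(m+n)!}\,dx$. Using $\sqrt{x}\cdot\frac{1}{x}=\frac{1}{\sqrt{x}}$ and folding the extra minus sign into the prefactor, which sends $\frac{(-1)^{n+1}}{2^n}$ to $\frac{(-1)^n}{2^n}$, reproduces exactly $I_n$. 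This closes the induction.

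I expect the only delicate points to be bookkeeping rather than substance: carefully tracking the signs and the powers of $2$ through the cancellation, and confirming that the integration by parts is legitimate, namely that all integrals converge and the boundary evaluation is valid. Both follow immediately from the rapid decay of $e^{-\pi x^2}$ against the at-most-polynomial growth of $p(-\pi x^2;n)$ and the logarithmic factors. The conceptual core of the argument, recognizing the recursion for $p(\cdot;n+1)$ as the engine driving the integration by parts, requires no further estimate, so I anticipate no genuine analytic obstacle.
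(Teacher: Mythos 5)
Your proposal is correct and is essentially the paper's own argument: the paper likewise applies integration by parts $n$ times, with the logarithmic factor as $v$ at each stage, so that the recursion defining $p(\cdot;n)$ generates the polynomial factor; your downward induction $I_{n+1}=I_n$ is just that same iterated step packaged as an induction. The boundary-term and convergence checks you flag are handled correctly and match what the paper leaves implicit.
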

\begin{proof}
This follows from integration by parts 
\[
\int u \,dv = u v - \int v \,du
\]
applied $n$ times to integral on the left side; use $\displaystyle v = \frac{\log(\frac{ x}{c})^{m+h}}{(m+h)!}$ for the $h$-th application of integration by parts. 
\end{proof}

 \begin{theorem} \label{ak with p}
 For integers $k\geq0$ and $n \geq0$, 
 \[
 a_{k+1} = 4\int_1^\infty \frac{e^{-\pi x^2}}{\sqrt{x}}  \frac{(-1)^np(-\pi x^2;n)}{2^n}L_{k+\frac{n}{2}}(x) \, dx.
 \]
 \end{theorem}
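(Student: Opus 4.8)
The plan is to derive the identity from Corollary \ref{a positive}, which is precisely the case $n=0$ (there $p(\,\cdot\,;0)=1$), by invoking Lemma \ref{int by parts} once for each summand of $L$.

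First I would take the $n=0$ formula
\[
a_{k+1}=4\int_1^\infty \frac{e^{-\pi x^2}}{\sqrt{x}}\,L(x;k)\,dx
\]
and unfold $L(x;k)$. Because the exponents $2k$ and $2k+2$ are even, each summand may be written with $\log(\frac{x}{j})$ in place of $\log(\frac{j}{x})$, and because the constraint $1\le j\le\lfloor x\rfloor$ is the same as $j\le x$ for positive integers $j$, I would interchange the finite inner sum with the integral to obtain
\[
a_{k+1}=4\sum_{j=1}^\infty \frac{1}{\sqrt{j}}\int_j^\infty \frac{e^{-\pi x^2}}{\sqrt{x}}\left(\frac{\log(\frac{x}{j})^{2k}}{(2k)!}-\frac{\log(\frac{x}{j})^{2k+2}}{4\,(2k+2)!}\right)dx .
\]
Here $j$ is the summation index that will play the role of $c$ in Lemma \ref{int by parts}; it is distinct from the parameter $n$ of the theorem.

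Next I would apply Lemma \ref{int by parts} with $c=j$ to each of the two inner integrals, once with logarithmic exponent $2k$ and once with exponent $2k+2$. Each application inserts the common factor $\frac{(-1)^n p(-\pi x^2;n)}{2^n}$, which is independent of $j$, and raises the two logarithmic powers to $2k+n$ and $2k+n+2$. I would then pull this common factor out of the sum over $j$, reverse the earlier interchange to restore $\int_1^\infty$ with the inner sum running up to $\lfloor x\rfloor$, and recognise the remaining $j$-sum as $L_{k+\frac{n}{2}}(x)$, whose two logarithmic exponents are $2(k+\frac{n}{2})=2k+n$ and $2(k+\frac{n}{2})+2=2k+n+2$.

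Two points need care. The interchange of the infinite $j$-sum with the integral, in both directions, I would justify by splitting $L$ into its nonnegative first term and nonpositive second term, each single-signed for $x\ge j$, and applying Tonelli's theorem; the resulting series converge because the Gaussian $e^{-\pi x^2}$ dominates the logarithmic-polynomial growth, for which the bound $|L(x;k)|\le 2x\log(x)^{2k}$ and its analogue at index $k+\frac{n}{2}$ suffice. The second and more delicate point, which I expect to be the main obstacle, is the sign bookkeeping: the factor $(-1)^n$ in the statement is exactly the one produced by Lemma \ref{int by parts}, and matching the raised powers $\log(\frac{x}{j})^{2k+n}$ to the definition of $L_{k+\frac{n}{2}}$ requires keeping track of whether that definition is written with $\log(\frac{x}{j})$ or $\log(\frac{j}{x})$, the two differing by $(-1)^{2k+n}=(-1)^{n}$ precisely when the raised exponent is odd.
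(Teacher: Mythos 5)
Your proposal is correct and follows essentially the same route as the paper: both rewrite the $n=0$ formula as a sum over $j$ (the paper's $M$) of integrals $\int_j^\infty$, apply Lemma \ref{int by parts} with $c=j$ termwise, and then reassemble the result into $\int_1^\infty$ against $L_{k+\frac{n}{2}}$. You are somewhat more careful than the paper, which silently performs the interchange and does not comment on the $(-1)^n$ discrepancy between $\log(\tfrac{x}{j})^{2k+n}$ and $\log(\tfrac{j}{x})^{2k+n}$ for odd $n$ that you rightly flag.
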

 \begin{proof}
 Recall
 \[
a_{k+1} = 4\sum_{M=1}^\infty \int_M^{M+1} \frac{e^{-\pi x^2} }{\sqrt{x}}  \sum_{n=1}^{M}  (\frac{\log(\frac{n}{x})^{2k}}{(2k)!\sqrt{n}} - \frac{\log(\frac{n}{x})^{2k+2}}{(2k+2)!4\sqrt{n}})\, dx. 
 \]
We re-arrange this as 
\[
a_{k+1} = 4\sum_{M=1}^\infty \int_{M}^\infty \frac{e^{-\pi x^2} }{\sqrt{x}}(\frac{\log(\frac{M}{x})^{2k}}{(2k)!\sqrt{M}} - \frac{\log(\frac{M}{x})^{2k+2}}{(2k+2)!4\sqrt{M}})\, dx.
\]
 We apply Lemma \ref{int by parts} to get 
\[
a_{k+1} = 4\sum_{M=1}^\infty \int_{M}^\infty \frac{e^{-\pi x^2} }{\sqrt{x}} \frac{(-1)^np(-\pi x^2;n)}{2^n}(\frac{\log(\frac{M}{x})^{2k+n}}{(2k+n)!\sqrt{M}} - \frac{\log(\frac{M}{x})^{2k+2+n}}{(2k+2+n)!4\sqrt{M}})\, dx.
\]
We re-arrange in the reverse manner to obtain 
\begin{align*}
a_{k+1} &= 4\sum_{M=1}^\infty \int_{1}^\infty \frac{e^{-\pi x^2} }{\sqrt{x}}\frac{(-1)^np(-\pi x^2;n)}{2^n}(\frac{\log(\frac{M}{x})^{2k+n}}{(2k+n)!\sqrt{M}} - \frac{\log(\frac{M}{x})^{2k+2+n}}{(2k+2+n)!4\sqrt{M}})\, dx \\ 
& = 4 \int_1^\infty \frac{e^{-\pi x^2} }{\sqrt{x}}\frac{(-1)^np(-\pi x^2;n)}{2^n}L_{k+\frac{n}{2}}(x)\, dx.
\end{align*}

 \end{proof}

We define $\mathrm{Wallis}(N)$ for estimates in the next results.
\begin{definition}
The Wallis product \emph{\cite{Wallis}} is 
\[
\frac{\pi}{2} =\prod_{n=1}^\infty \frac{(2n)^2}{(2n-1)(2n+1)}
\]
with
\[
\frac{(2n)^2}{(2n-1)(2n+1)} \geq 1
\]
for $n \geq1$.
We denote
\[
\mathrm{Wallis}(N) = \prod_{n=1}^N \frac{(2n)^2}{(2n-1)(2n+1)}
\]
so 
\[
\frac{\pi}{2}  \geq \mathrm{Wallis}(N)
\]
for all $N\geq1$.
\end{definition}

\begin{theorem} \label{a decreasing}
For integer $k\geq0$,
\[
a_k \geq a_{k+1}\geq0.
\]
\end{theorem}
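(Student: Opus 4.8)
The plan is to reduce the theorem to the single inequality $a_k-a_{k+1}\ge 0$, since the positivity $a_{k+1}\ge 0$ is already Corollary \ref{a positive}. The mechanism is Theorem \ref{ak with p}, which expresses $a_{k+1}$ as an integral against $L(x;k+\frac n2)$ weighted by $\frac{(-1)^n p(-\pi x^2;n)}{2^n}$ for \emph{any} $n\ge 0$; the idea is to choose $n$ for $a_k$ and for $a_{k+1}$ so that both become integrals against the same $L(x;k)$, making the difference collapse to one integrand.

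For the generic case $k\ge 1$ I would take $n=0$ for $a_{k+1}$, recovering $a_{k+1}=4\int_1^\infty \frac{e^{-\pi x^2}}{\sqrt x}L(x;k)\,dx$ since $p(x;0)=1$. For $a_k=a_{(k-1)+1}$ I would apply Theorem \ref{ak with p} with $k$ replaced by $k-1$ (valid as $k-1\ge0$) and $n=2$, so that the index $k-1+\frac22=k$ matches and the prefactor $4\cdot\frac{(-1)^2}{2^2}=1$ yields $a_k=\int_1^\infty \frac{e^{-\pi x^2}}{\sqrt x}\,p(-\pi x^2;2)\,L(x;k)\,dx$. Subtracting gives
\[
a_k-a_{k+1}=\int_1^\infty \frac{e^{-\pi x^2}}{\sqrt x}\bigl(p(-\pi x^2;2)-4\bigr)L(x;k)\,dx .
\]
Because $\frac{e^{-\pi x^2}}{\sqrt x}>0$ and $L(x;k)\ge0$ by Theorem \ref{LB positive}, it suffices to establish the positivity property $p(-\pi x^2;2)\ge4$ on $[1,\infty)$. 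With $p(x;2)=1+24x+16x^2$ this is $16\pi^2x^4-24\pi x^2-3\ge0$; setting $u=\pi x^2\ge\pi$ it is the upward parabola $16u^2-24u-3$, whose larger root $\frac{3+2\sqrt3}{4}\approx1.62$ is below $\pi$, so the inequality holds throughout. This finishes $a_k\ge a_{k+1}$ for every $k\ge1$.

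The exceptional and hardest case is $k=0$, where this telescoping is unavailable: Theorem \ref{ak with p} never produces $a_0$, which instead has the representation $a_0=\int_{-1}^1 e^{-\pi x^2}\,dx+\int_1^\infty \frac{e^{-\pi x^2}}{\sqrt x}B(x)\,dx$. Combining this with $a_1=4\int_1^\infty \frac{e^{-\pi x^2}}{\sqrt x}L(x;0)\,dx$, the target $a_0\ge a_1$ becomes
\[
\int_1^\infty \frac{e^{-\pi x^2}}{\sqrt x}\bigl(4L(x;0)-B(x)\bigr)\,dx\le \int_{-1}^1 e^{-\pi x^2}\,dx .
\]
Here $4L(x;0)-B(x)$ is positive and unbounded, so no pointwise comparison works; instead I would majorize it using $\sum_{n\le x}n^{-1/2}\le2\sqrt x$ (as in the proof that $B\ge0$) and the crude size of $\log(n/x)^2$, reducing the left side to a finite combination of Gaussian integrals over $[1,\infty)$. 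These, together with the right side $\int_{-1}^1e^{-\pi x^2}\,dx=1-2\int_1^\infty e^{-\pi x^2}\,dx$, are precisely the quantities the $\mathrm{Wallis}(N)$ estimates are built to control through $\mathrm{Wallis}(N)\le\frac\pi2$. I expect this quantitative bookkeeping for $k=0$ — getting the estimates tight enough for the Wallis bounds to close the inequality — to be the main obstacle, while the $k\ge1$ case is immediate once the $p(x;2)$ positivity is in hand.
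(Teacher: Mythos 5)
Your argument for $k\ge 1$ is correct and is in substance identical to the paper's: the paper also subtracts the $n=2$ instance of Theorem \ref{ak with p} from the $n=0$ instance to get $a_{k+1}-a_{k+2}=\int_1^\infty \frac{e^{-\pi x^2}}{\sqrt{x}}\bigl(\tfrac{p(-\pi x^2;2)}{4}-1\bigr)L(x;k+1)\,dx$ (your version is the same identity with the index shifted by one), and then invokes $L\ge 0$ together with the positivity of $p(-\pi x^2;2)-4$ on $[1,\infty)$, which the paper proves in Lemma \ref{p2} by expanding in powers of $x^2-1$ and bounding $\pi/2$ below by Wallis partial products; your direct root computation $\frac{3+2\sqrt{3}}{4}<\pi$ for $16u^2-24u-3$ is an equally valid and slightly cleaner substitute.

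The one genuine gap is the case $k=0$: you correctly identify that Theorem \ref{ak with p} cannot produce $a_0$ and you set up the right inequality, but you stop at a plan rather than a proof, and you overestimate the difficulty. Your own sketch closes with almost no bookkeeping: since the $\log(n/x)^2$ terms in $L(x;0)$ enter with a negative sign, dropping them and using $\sum_{n\le x}n^{-1/2}\le 2\sqrt{x}$ gives $4L(x;0)-B(x)\le 5\sum_{n\le x}n^{-1/2}-2\sqrt{x}\le 8\sqrt{x}$, so the left side is at most $8\int_1^\infty e^{-\pi x^2}\,dx$, while the right side is $1-2\int_1^\infty e^{-\pi x^2}\,dx$; the needed bound $10\int_1^\infty e^{-\pi x^2}\,dx\le 10\int_1^\infty xe^{-\pi x^2}\,dx=\frac{5e^{-\pi}}{\pi}<1$ is immediate. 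The paper instead translates the Gaussian, writing $2\int_0^\infty e^{-\pi x^2}\,dx=\int_1^\infty\frac{e^{-\pi x^2}}{\sqrt{x}}e^{2\pi x-\pi}\,2\sqrt{x}\,dx$, so that $a_0-a_1\ge\int_1^\infty\frac{e^{-\pi x^2}}{\sqrt{x}}\bigl((e^{2\pi x-\pi}-5)2\sqrt{x}+5B(x)\bigr)\,dx$ with a pointwise nonnegative integrand (using $e^{\pi}>5$ via Wallis); either route works, but as submitted your proof of $a_0\ge a_1$ is incomplete.
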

\begin{proof}

For $k\geq0$, we use Theorem \ref{ak with p} with $n=2$ to obtain
\[
a_{k+1} -a_{k+2} =  \int_1^\infty \frac{e^{-\pi x^2}}{\sqrt{x}} (\frac{p(-\pi x^2;2)}{4}-1) L_{k+1}(x)\, dx
\]
Since $\displaystyle \frac{p(-\pi x^2;2)}{4}-1\geq 0$ by Lemma \ref{p2} and $L_{k+1}(x)\geq 0$ by Lemma \ref{LB positive} for $x \geq 1$, this completes the proof for $k \geq 1$.

We next prove $a_0 >a_1$. Recall 
\begin{equation}\label{recall a0}
a_0 = 2 \int_0^\infty e^{-\pi x^2} \, dx- \int_1^\infty \frac{e^{-\pi x^2}}{\sqrt{x}}\sum_{n=1}^{\lfloor x \rfloor} \frac{1}{\sqrt{n}}\, dx.
\end{equation}
and 
\begin{equation}\label{a1}
a_1 = 4\int_1^\infty \frac{e^{-\pi x^2}}{\sqrt{x}}\sum_{n=1}^{\lfloor x \rfloor} \frac{1}{\sqrt{n}} - \frac{\log(x)^2}{2!4\sqrt{n}} \, dx . 
\end{equation}
 We apply the change of variables $x \mapsto x-1$ to the left integral of \eqref{recall a0} and obtain
 \[
 a_0 =  \int_1^\infty \frac{e^{-\pi x^2}}{\sqrt{x}} (e^{2\pi x - \pi}2\sqrt{x}\, dx- \int_1^\infty \frac{e^{-\pi x^2}}{\sqrt{x}} \sum_{n=1}^{\lfloor x \rfloor} \frac{1}{\sqrt{n}})\, dx.
  \]
  Thus 
 
  \[
  a_0-a_1\geq \int_1^\infty \frac{e^{-\pi x^2}}{\sqrt{x}}\left((e^{2\pi x - \pi}-5)2\sqrt{x}+ 5(2\sqrt{x} - \sum_{n=1}^{\lfloor x \rfloor} \frac{1}{\sqrt{n}}) \right)\, dx.
  \]
  We have seen in Theorem \ref{a positive} that 
\[
2\sqrt{x} - \sum_{n=1}^{\lfloor x \rfloor} \frac{1}{\sqrt{n}} \geq 0.
\]
And for $x \geq 1$
\begin{align*}
e^{2\pi x - \pi}-5 &\geq e^{2(\frac{\pi}{2})} - 5\\ 
&\geq 1 + 2\mathrm{Wallis}(1)+\frac{2^2}{2}\mathrm{Wallis}(1)^2-5\\ 
&=\frac{55}{9}-5\\ 
&\geq 0. 
\end{align*}
This completes the proof.
 \end{proof}
 This is the lemma used in Theorem \ref{a decreasing}.
\begin{lemma} \label{p2}
For $x \geq 1$,
\[
\frac{p(-\pi x^2;2)}{4}-1=  \frac{(1-24 \pi  x^2+ 16 \pi^2 x^4)}{4}-1\geq0.
\]
\end{lemma}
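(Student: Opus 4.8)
The plan is to reduce the claimed inequality to a single-variable quadratic inequality and then verify it at the left endpoint using monotonicity. First I would read off from the displayed value $p(x;2) = 1 + 24x + 16x^2$ the substitution $p(-\pi x^2;2) = 1 - 24\pi x^2 + 16\pi^2 x^4$, so that
\[
\frac{p(-\pi x^2;2)}{4} - 1 = \frac{16\pi^2 x^4 - 24\pi x^2 - 3}{4}.
\]
Hence it suffices to show that the numerator $16\pi^2 x^4 - 24\pi x^2 - 3$ is nonnegative for $x \geq 1$.

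Next I would make the change of variable $u = \pi x^2$, which is a nonnegative increasing function of $x$, so that $x \geq 1$ corresponds to $u \geq \pi$. The problem then becomes showing $g(u) := 16u^2 - 24u - 3 \geq 0$ for all $u \geq \pi$. The key structural observation is that $g$ is an upward-opening parabola with vertex at $u = \tfrac{3}{4}$, so $g$ is increasing on $[\tfrac{3}{4}, \infty)$; since $\pi > \tfrac{3}{4}$, it suffices to check the single endpoint value $g(\pi) \geq 0$, i.e.\ $16\pi^2 - 24\pi - 3 \geq 0$.

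Finally I would discharge this numerical inequality using a rational lower bound for $\pi$ drawn from the Wallis product already introduced above: taking $N = 1$ gives $\tfrac{\pi}{2} \geq \mathrm{Wallis}(1) = \tfrac{4}{3}$, hence $\pi \geq \tfrac{8}{3}$. Because $\tfrac{8}{3} > \tfrac{3}{4}$ lies past the vertex, monotonicity yields
\[
g(\pi) \geq g\!\left(\tfrac{8}{3}\right) = \tfrac{1024}{9} - 64 - 3 = \tfrac{421}{9} > 0,
\]
which completes the proof.

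The only delicate point — and it is minor — is making sure the lower bound on $\pi$ is strong enough to clear the vertex region and land above the larger root of $g$, which is $\tfrac{3 + 2\sqrt{3}}{4} \approx 1.62$. The Wallis estimate $\pi \geq \tfrac{8}{3} \approx 2.67$ comfortably exceeds this threshold, so no sharper bound is needed. I would present the monotonicity step carefully precisely so that a single evaluation of $g$ at $u = \tfrac{8}{3}$ settles the matter, rather than having to analyze the roots of $g$ directly.
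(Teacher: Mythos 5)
Your proof is correct, but it takes a genuinely different route from the paper's. You substitute $u=\pi x^2$, note that the quadratic $g(u)=16u^2-24u-3$ is increasing to the right of its vertex $u=\tfrac34$, and reduce everything to the single evaluation $g\!\left(\tfrac83\right)=\tfrac{421}{9}>0$ via the bound $\pi\ge\tfrac83$ from $\mathrm{Wallis}(1)$; all the arithmetic checks out, and the monotonicity step is handled cleanly. The paper instead expands
\[
\frac{p(-\pi x^2;2)}{4}-1=\left(-\tfrac{3}{4}-6\pi+4\pi^2\right)+\left(-6\pi+8\pi^2\right)(x^2-1)+4\pi^2(x^2-1)^2
\]
and shows each coefficient is nonnegative using only the cruder bound $\tfrac{\pi}{2}\ge\mathrm{Wallis}(0)=1$, so positivity for $x\ge1$ follows term by term with no monotonicity argument at all. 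The trade-off: the paper's decomposition is a "sum of manifestly nonnegative pieces" that needs only $\pi\ge 2$, while yours concentrates the work into one endpoint check but requires the slightly sharper constant $\pi\ge\tfrac83$ (in fact $g(2)=13>0$, so $\mathrm{Wallis}(0)$ would also have sufficed for your route). Both are complete and elementary; yours is arguably more systematic for a general quadratic, the paper's is more self-evidently positive on inspection.
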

\begin{proof}
We have 
\[
\frac{p(-\pi x^2;2)}{4}-1 = (-\frac{3}{4} -6 \pi+4 \pi^2) + (-6\pi+8\pi^2)(x^2-1)+ 4\pi^2(x^2-1)^2.
\]

Then 
\begin{align*}
(-6\pi+8\pi^2) = 2\pi(8 (\frac{\pi}{2})-3)\geq0
\end{align*}
where we bound $\frac{\pi}{2}$ from below with $\mathrm{Wallis}(0)=1$. 

And 
\begin{align*}
(-\frac{3}{4} -6 \pi+4 \pi^2)& = 4(\frac{\pi}{2})(4(\frac{\pi}{2})-3)-\frac{3}{4}\\
&\geq 4(\frac{\pi}{2})(4\cdot 1-3)-\frac{3}{4}\\
&\geq 4\cdot1-\frac{3}{4}\\ 
&\geq 0.
\end{align*}
where we have again bounded $\frac{\pi}{2}$ with $\mathrm{Wallis}(0)=1$.
This proves the lemma.
\end{proof}

\section{Further Work}

\begin{itemize}

\item Apply the integral formulas to proving 
\[
\sum_{j=0}^n(-1)^j {n \choose j} a_{k+j}\geq0
\] 
 and the generalized Tur\'an inequalities. 
 
 \item Relate the polynomials $p(x;n)$ to known polynomial families.
 
 \item See if similar results hold for Dirichlet $L$-functions.
 
\end{itemize}

\end{document}